\documentclass{article}
\usepackage{amssymb}
\usepackage{amsmath}
\usepackage{amsfonts}
\usepackage{everypage}
\usepackage{draftwatermark}
\usepackage{sw20bams}

\setcounter{MaxMatrixCols}{10}

\newtheorem{theorem}{Theorem}
\newtheorem{acknowledgement}{Acknowledgement}

\newtheorem{conclusion}[theorem]{Conclusion}

\newtheorem{corollary}[theorem]{Corollary}

\newtheorem{definition}[theorem]{Definition}
\newtheorem{example}[theorem]{Example}

\newtheorem{lemma}[theorem]{Lemma}

\newtheorem{proposition}[theorem]{Proposition}
\newtheorem{remark}[theorem]{Remark}

\input{tcilatex}
\SetWatermarkText{Accepted for publication in New Math and Nat. Comp.}

\begin{document}

\title{Distance and Similarity Measures for Soft Sets\bigskip }
\author{Athar Kharal\thanks{%
Corresponding author. Phone: +92 333 6261309} \thanks{%
This paper was submitted to NM\&NC on 28 January 2009 and was accepted for
publication on 16 June 2009. It will apear in print in (hopefully) Nov 2010
issue of NM\&NC.}\bigskip  \\
College of Aeronautical Engineering,\\
National University of Sciences and Technology (NUST),\\
PAF Academy Risalpur 24090, PAKISTAN\\
\textit{atharkharal@gmail.com}}
\date{ }
\maketitle

\begin{abstract}
In [P. Majumdar, S. K. Samanta, \textit{Similarity measure of soft sets},
New Mathematics and Natural Computation 4(1)(2008) 1-12], the authors use
matrix representation based distances of soft sets to introduce matching
function and distance based similarity measures. We first give
counterexamples to show that their Definition 2.7 and Lemma 3.5(3) contain
errors, then improve their Lemma 4.4 making it a corllary of our result. The
fundamental assumption of \cite{sfst08maj} has been shown to be flawed. This
motivates us to introduce set operations based measures. We present a case
(Example \ref{ex_showing_superiority}) where Majumdar-Samanta similarity
measure produces an erroneous result but the measure proposed herein decides
correctly. Several properties of the new measures have been presented and
finally the new similarity measures have been applied to the problem of
financial diagnosis of firms.\bigskip

\textbf{Keywords:} Applied soft sets; Similarity measure; Distance measure;
Financial diagnosis; Similarity based decision making;
\end{abstract}

\section{\textbf{Introduction\protect\bigskip }}

In 1999, D. Molodtsov \cite{sfst99mol}, introduced the notion of a soft set
as a collection of approximate descriptions of an object. This initial
description of the object has an approximate nature, and we do not need to
introduce the notion of exact solution. The absence of any restrictions on
the approximate description in soft sets make this theory very convenient
and easily applicable in practice. Applications of soft sets in areas
ranging from decision problems to texture classification, have surged in
recent years \cite{sfst07kov,sfst02maj,sfst06mus,sfst05xia,sfst08zou}.

Similarity measures quantify the extent to which different patterns,
signals, images or sets are alike. Such measures are used extensively in the
application of fuzzy sets, intuitionistic fuzzy set and vague sets to the
problems of pattern recognition, signal detection, medical diagnosis and
security verification systems. That is why several researchers have studied
the problem of similarity measurement between fuzzy sets \cite{fst93hyu},
intuitionistic fuzzy sets (IFSs) and vague sets. Ground breaking work for
introducing similarity measure of soft sets was presented by Majumdar and
Samanta in \cite{sfst08maj}. Their work uses matrix representation based
distances of soft sets to introduce similarity measures. In this paper, we
propose new similarity measures using set theoretic operations, besides
showing how the earlier similarity measures of Majumdar and Samanta are
inappropriate. We also present an application of the proposed measures of
similarity in the area of automated financial analysis.

This paper is organized as follows: in Section 2, requisite preliminary
notions from Soft Set Theory have been presented. Section 3 comprises some
counterexamples to show that some claims in \cite{sfst08maj} are not
correct. At the end of this section we also improve and generalize Lemma 4.4
of \cite{sfst08maj}. In Section 4 we give our motivation and rationale to
introduce set operations based distance and similarity measures. Section 5
introduces the notion of set operation based distance between soft sets and
some of its weaker forms. In Section 6 similarity measures have been
defined. Finally Section 7 is the application of new similarity measures to
the problem of financial diagnosis of firms.\bigskip

\section{\textbf{Preliminaries\protect\bigskip }}

A pair $(F,A)$ is called a soft set \cite{sfst99mol} over $X$, where $F$ is
a mapping given by $F:A\rightarrow P(X).$\newline
In other words, a soft set over $X$ is a parametrized family of subsets of
the universe $X.$ For $\varepsilon \in A,$ $F(\varepsilon )$ may be
considered as the set of $\varepsilon $-approximate elements of the soft set 
$(F,A)$. Clearly a soft set is not a set in ordinary sense.

\begin{definition}
\cite{submission-OnSfSts} Let $X$ be a universe and $E$ a set of attributes.
Then the pair $\left( X,E\right) ,$ called a soft space, is the collection
of all soft sets on $X$ with attributes from $E$.
\end{definition}

\begin{definition}
\cite{sfst05pei}\label{st-subset} For two soft sets $(F,A)$ and $(G,B)$ over 
$X$, we say that $(F,A)$ is a soft subset of $(G,B),$ if\newline
$(i)$ $A\subseteq B,$ and\newline
$(ii)$ $\forall ~\varepsilon \in A,F(\varepsilon )\subseteq G(\varepsilon )$.%
\newline
We write $(F,A)~\widetilde{\subseteq }~(G,B)$. $(F,A)$ is said to be a soft
super set of $(G,B)$, if $(G,B)$ is a soft subset of $(F,A)$. We denote it
by $(F,A)~\widetilde{\supseteq }~(G,B)$.
\end{definition}

\begin{definition}
\cite{sfst03maj}\label{st-Union} Union of two soft sets $(F,A)$ and $(G,B)$
over a common universe $X$ is the soft set $(H,C),$ where $C=A\cup B,$ and $%
\forall ~\varepsilon \in C,$%
\begin{equation*}
H(\varepsilon )=\left\{ 
\begin{array}{ccc}
F(\varepsilon ), & if & \varepsilon \in A-B, \\ 
G(\varepsilon ), & if & \varepsilon \in B-A, \\ 
F(\varepsilon )\cup G(\varepsilon ), & if & \varepsilon \in A\cap B.%
\end{array}%
\right.
\end{equation*}%
We write $(F,A)~\tilde{\cup}~(G,B)=(H,C).$
\end{definition}

\begin{definition}
\label{intersection_def_Ours}\cite{sfst08ali} Let $(F,A)$ and $(G,B)$ be two
soft sets over $X$ with $A\cap B\neq \phi $. Restricted intersection of two
soft sets $(F,A)$ and $(G,B)$ is a soft set $(H,C),$ where $C=A\cap B,$ and $%
\forall ~\varepsilon \in C,H(\varepsilon )=F(\varepsilon )\cap G(\varepsilon
)$. We write $(F,A)\tilde{\cap}(G,B)=(H,C).$
\end{definition}

\begin{definition}
\cite{sfst08maj} The complement of a soft set $\left( F,A\right) $ is
denoted by $\left( F,A\right) ^{c},$ and is defined by $\left( F,A\right)
^{c}=\left( F^{c},A\right) ,$ where $F^{c}:A\rightarrow P\left( X\right) $
is a mapping given by $F^{c}\left( \varepsilon \right) =\left( F\left(
\varepsilon \right) \right) ^{c},\forall \varepsilon \in A.$
\end{definition}

In the sequel we shall denote the absolute null and absolute whole soft sets
in a soft space $\left( X,E\right) $ as $\left( F_{\phi },E\right) $ and $%
\left( F_{X},E\right) $ , respectively. These have been defined in \cite%
{sfst08ali} as:%
\begin{eqnarray*}
\left( F_{\phi },E\right) &=&\left\{ \varepsilon =\phi ~|~\forall
\varepsilon \in E\right\} , \\
\left( F_{X},E\right) &=&\left\{ \varepsilon =X~|~\forall \varepsilon \in
E\right\} .
\end{eqnarray*}

\section{\textbf{Counterexamples\protect\bigskip }}

Recently Majumdar and Samanta \cite{sfst08maj} have written the ground
breaking paper on similarity measures of soft sets. In this section we first
give counterexamples to show that their Definition 2.7 and Lemma 3.5(3)
contain some errors. We, then, improve Lemma 4.4 \cite{sfst08maj} and make
it a corollary of our result.

A matching function based similarity measure has been defined in \cite%
{sfst08maj} as:

\begin{definition}
\cite{sfst08maj} If $E_{1}=E_{2},$ then similarity between $\left(
F_{1},E_{1}\right) $ and $\left( F_{2},E_{2}\right) $ is defined by%
\begin{equation}
S\left( F_{1},F_{2}\right) =\frac{\Sigma _{i}\overrightarrow{F}_{1}\left(
e_{i}\right) \cdot \overrightarrow{F}_{2}\left( e_{i}\right) }{\Sigma _{i}%
\left[ \overrightarrow{F}_{1}\left( e_{i}\right) ^{2}\vee \overrightarrow{F}%
_{2}\left( e_{i}\right) ^{2}\right] }.  \tag{I}  \label{MtchFnSim}
\end{equation}%
If $E_{1}\neq E_{2}$ and $E=E_{1}\cap E_{2}\neq \phi ,$ then we first define 
$\overrightarrow{F}_{1}\left( e\right) =\underline{0}$ for $e\in
E_{2}\backslash E$ and $\overrightarrow{F}_{2}\left( f\right) =\underline{0}$
for $f\in E_{1}\backslash E.$ Then $S\left( F_{1},F_{2}\right) $ is defined
by formula (\ref{MtchFnSim}).
\end{definition}

\begin{lemma}
\label{lm3.5,sfst08maj}(Lemma 3.5 \cite{sfst08maj}) Let $\left(
F_{1},E_{1}\right) $ and $\left( F_{2},E_{2}\right) $ be two soft sets over
the same finite universe $U.$ Then the following hold:

$\left( 1\right) $ $S\left( F_{1},F_{2}\right) =S\left( F_{2},F_{1}\right) $

$\left( 2\right) $ $0\leq S\left( F_{1},F_{2}\right) \leq 1$

$\left( 3\right) $ $S\left( F_{1},F_{1}\right) =1$
\end{lemma}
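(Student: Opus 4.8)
The plan is to dispatch the three items in the order given, flagging at the outset that (1) and (3) are formal consequences of the definition of $S$ while (2) needs one elementary inequality; the genuine obstacle, however, sits in (3), which read literally is \emph{not} true for every soft set, so that ``proving'' it really means isolating the hypothesis under which it holds and producing the counterexample that shows the hypothesis is needed.

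For (1), I would simply observe that $\overrightarrow{F}_j(e_i)$ is the characteristic vector in $\{0,1\}^{|U|}$ of $F_j(e_i)\subseteq U$, that the Euclidean dot product appearing in the numerator of (\ref{MtchFnSim}) is commutative, and that $\vee=\max$ appearing in the denominator is commutative; hence the whole ratio is unchanged under interchanging the two soft sets, i.e. $S(F_1,F_2)=S(F_2,F_1)$, with nothing further to check. If $E_1\neq E_2$ one first pads both families with $\underline 0$-columns over the missing parameters exactly as the definition of $S$ prescribes, and that padding is itself symmetric in the two arguments.

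For (2), using the $\{0,1\}$-valued description we get, for each $i$, that $\overrightarrow{F}_1(e_i)\cdot\overrightarrow{F}_2(e_i)=|F_1(e_i)\cap F_2(e_i)|\ge 0$ and $\overrightarrow{F}_j(e_i)^2=|F_j(e_i)|\ge 0$, so numerator and denominator of (\ref{MtchFnSim}) are non-negative and $S(F_1,F_2)\ge 0$. For the upper bound I would argue termwise: $|F_1(e_i)\cap F_2(e_i)|\le\min\{|F_1(e_i)|,|F_2(e_i)|\}\le|F_1(e_i)|\vee|F_2(e_i)|$ for every $i$, and summing over $i$ makes the numerator at most the denominator, i.e. $S(F_1,F_2)\le 1$. (Cauchy--Schwarz together with the fact that $ab\le a^2\vee b^2$ for $a,b\ge 0$ gives the same conclusion, but the counting bound is cleaner.) The one caveat to record is that the denominator vanishes exactly when every $F_j(e_i)=\phi$, that is, when both soft sets coincide with the absolute null soft set $(F_\phi,E)$; the inequalities are therefore to be understood with that degenerate case excluded, and it is precisely this exclusion on which the counterexample of the next subsection turns.

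For (3), substituting $F_2=F_1$ into (\ref{MtchFnSim}) collapses both numerator and denominator to $\sum_i\overrightarrow{F}_1(e_i)^2=\sum_i|F_1(e_i)|$, giving $S(F_1,F_1)=1$ whenever that sum is nonzero. The main obstacle --- and the reason part (3) cannot be closed in a single line --- is the case $(F_1,E_1)=(F_\phi,E_1)$: there the sum is $0$, $S(F_1,F_1)$ becomes the indeterminate form $0/0$, and the claimed identity fails. Accordingly my treatment of (3) amounts to a correction rather than a proof: I would verify $S(F_1,F_1)=1$ under the added hypothesis $(F_1,E_1)\neq(F_\phi,E_1)$ and then exhibit the absolute null soft set as the explicit counterexample showing that the hypothesis cannot be dropped, which is exactly the point this section is meant to make.
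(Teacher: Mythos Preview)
Your treatment of (3) is exactly what the paper does: it quotes this lemma from Majumdar--Samanta not to prove it but to refute part (3), and its counterexample is precisely your absolute null soft set $(F_\phi,E)$ (taken with $|X|=|E|=3$), yielding $S(F_1,F_1)=0/0$. So on the only point the paper actually addresses, you and the paper coincide.

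Where you differ is in scope: the paper does not attempt to prove (1) or (2) at all --- it simply restates the lemma and then attacks (3). Your proofs of (1) and (2) are correct and elementary (commutativity of the dot product and of $\vee$ for (1); the termwise bound $|F_1(e_i)\cap F_2(e_i)|\le |F_1(e_i)|\vee|F_2(e_i)|$ for (2)), and your explicit flagging of the degenerate $0/0$ denominator in (2) nicely anticipates the counterexample in (3). This extra material is sound but goes beyond what the paper sets out to do; the paper's aim here is purely the counterexample, not a repaired proof.
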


Our next example shows that claim $\left( 3\right) $ of Lemma \ref%
{lm3.5,sfst08maj} is incorrect:

\begin{example}
Let $X=\left\{ a,b,c\right\} $ and $E=\left\{ e_{1},e_{2},e_{3}\right\} .$
We choose soft set $\left( F_{1},E\right) $ as:%
\begin{equation*}
\left( F_{1},E\right) =\left\{ e_{1}=\left\{ {}\right\} ,e_{2}=\left\{
{}\right\} ,e_{3}=\left\{ {}\right\} \right\}
\end{equation*}%
Then using (\ref{MtchFnSim}) we get%
\begin{equation*}
S\left( F_{1},F_{1}\right) =\frac{0}{0}.
\end{equation*}
\end{example}

Majumdar and Samanta have defined following distances between soft sets as
four distinct notions:

\begin{definition}
\cite{sfst08maj} For two soft sets $\left( \widehat{F}_{1},E\right) $ and $%
\left( \widehat{F}_{2},E\right) $ we define the mean Hamming distance $%
D^{s}\left( \widehat{F}_{1},\widehat{F}_{2}\right) $ between soft sets as:%
\begin{equation*}
D^{s}\left( \widehat{F}_{1},\widehat{F}_{2}\right) =\frac{1}{m}\left\{
\dsum\limits_{i=1}^{m}\dsum\limits_{j=1}^{n}\left\vert \widehat{F}_{1}\left(
e_{i}\right) \left( x_{j}\right) -\widehat{F}_{2}\left( e_{i}\right) \left(
x_{j}\right) \right\vert \right\} ,
\end{equation*}%
the normalized Hamming distance $L^{s}\left( \widehat{F}_{1},\widehat{F}%
_{2}\right) $ as:%
\begin{equation*}
L^{s}\left( \widehat{F}_{1},\widehat{F}_{2}\right) =\frac{1}{m~n}\left\{
\dsum\limits_{i=1}^{m}\dsum\limits_{j=1}^{n}\left\vert \widehat{F}_{1}\left(
e_{i}\right) \left( x_{j}\right) -\widehat{F}_{2}\left( e_{i}\right) \left(
x_{j}\right) \right\vert \right\} ,
\end{equation*}%
the Euclidean distance $E^{s}\left( \widehat{F}_{1},\widehat{F}_{2}\right) $
as:%
\begin{equation*}
E^{s}\left( \widehat{F}_{1},\widehat{F}_{2}\right) =\sqrt{\frac{1}{m}%
\dsum\limits_{i=1}^{m}\dsum\limits_{j=1}^{n}\left( \widehat{F}_{1}\left(
e_{i}\right) \left( x_{j}\right) -\widehat{F}_{2}\left( e_{i}\right) \left(
x_{j}\right) \right) ^{2}},
\end{equation*}%
the normalized Euclidean distance as:%
\begin{equation*}
Q^{s}\left( \widehat{F}_{1},\widehat{F}_{2}\right) =\sqrt{\frac{1}{m~n}%
\dsum\limits_{i=1}^{m}\dsum\limits_{j=1}^{n}\left( \widehat{F}_{1}\left(
e_{i}\right) \left( x_{j}\right) -\widehat{F}_{2}\left( e_{i}\right) \left(
x_{j}\right) \right) ^{2}}.
\end{equation*}
\end{definition}

\begin{remark}
\label{mainRemark}Majumdar and Samanta's observation (also used in their
proof of Lemma 4.4 \cite{sfst08maj}) that 
\begin{equation*}
\left\vert F\left( e_{i}\right) \left( x_{j}\right) -G\left( e_{i}\right)
\left( x_{j}\right) \right\vert \leq 1
\end{equation*}%
is inaccurate. The quantity $\left\vert F\left( e_{i}\right) \left(
x_{j}\right) -G\left( e_{i}\right) \left( x_{j}\right) \right\vert $ is
either $0$ or $1,$ only. Consequently, the term $\left( F\left( e_{i}\right)
\left( x_{j}\right) -G\left( e_{i}\right) \left( x_{j}\right) \right) ^{2},$
used for defining distances $E^{s}$ and $Q^{s}$, comes out to be identical
with 
\begin{equation*}
\left\vert F\left( e_{i}\right) \left( x_{j}\right) -G\left( e_{i}\right)
\left( x_{j}\right) \right\vert .
\end{equation*}%
This renders $E^{s}$ and $Q^{s}$ as mere square roots of $D^{s}$ and $L^{s},$
respectively. Symbolically we write:%
\begin{equation*}
E^{s}=\sqrt{D^{s}}\text{ and }Q^{s}=\sqrt{L^{s}}.
\end{equation*}%
Hence the four distances of Majumdar and Samanta are not distinct, rather
they are only two distances.
\end{remark}

In the sequel the cardinality of a set $A$ is denoted as $\left\Vert
A\right\Vert $. We now present the main result of this section as:

\begin{theorem}
\label{mainResult}Let $m=\left\Vert E\right\Vert ,n=\left\Vert X\right\Vert $%
. Then for any two soft sets $\left( F_{1},E\right) $ and $\left(
F_{2},E\right) $ we have\medskip

$\left( 1\right) $ $D^{s}\left( F_{1},F_{2}\right) \in \left\{ \frac{k}{m}%
~|~k=0,1,2,...,mn\right\} ,\bigskip $

$\left( 2\right) $ $L^{s}\left( F_{1},F_{2}\right) \in \left\{ \frac{k}{mn}%
~|~k=0,1,2,...,mn\right\} ,\bigskip $

$\left( 3\right) $ $E^{s}\left( F_{1},F_{2}\right) \in \left\{ \sqrt{\frac{k%
}{m}}~|~k=0,1,2,...,mn\right\} ,\smallskip $

$\left( 4\right) $ $Q^{s}\left( F_{1},F_{2}\right) \in \left\{ \sqrt{\frac{k%
}{mn}}~|~k=0,1,2,...,mn\right\} .$
\end{theorem}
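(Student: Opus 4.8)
The plan is to reduce all four statements to a single combinatorial fact about the double sum
$\Sigma := \sum_{i=1}^{m}\sum_{j=1}^{n}\left\vert \widehat{F}_{1}(e_{i})(x_{j})-\widehat{F}_{2}(e_{i})(x_{j})\right\vert$.
First I would recall that in the matrix representation each entry $\widehat{F}(e_{i})(x_{j})$ is the value of a characteristic function, hence lies in $\{0,1\}$. Therefore every difference $\widehat{F}_{1}(e_{i})(x_{j})-\widehat{F}_{2}(e_{i})(x_{j})$ lies in $\{-1,0,1\}$, and so each summand $\left\vert \widehat{F}_{1}(e_{i})(x_{j})-\widehat{F}_{2}(e_{i})(x_{j})\right\vert$ lies in $\{0,1\}$. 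Since $\Sigma$ is a sum of exactly $mn$ such summands, $\Sigma$ is an integer with $0\leq \Sigma \leq mn$; write $\Sigma =k$ with $k\in\{0,1,\dots,mn\}$.

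With this in hand, parts $(1)$ and $(2)$ are immediate, since $D^{s}(F_{1},F_{2})=\Sigma/m=k/m$ and $L^{s}(F_{1},F_{2})=\Sigma/(mn)=k/(mn)$, which are exactly the claimed sets. For parts $(3)$ and $(4)$ I would invoke Remark \ref{mainRemark}: because each value $\widehat{F}_{\ell}(e_{i})(x_{j})$ is $0$ or $1$, squaring the difference changes nothing, i.e.\ $\left(\widehat{F}_{1}(e_{i})(x_{j})-\widehat{F}_{2}(e_{i})(x_{j})\right)^{2}=\left\vert \widehat{F}_{1}(e_{i})(x_{j})-\widehat{F}_{2}(e_{i})(x_{j})\right\vert$. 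Hence the sum of squares inside $E^{s}$ and $Q^{s}$ also equals $\Sigma=k$, giving $E^{s}(F_{1},F_{2})=\sqrt{k/m}$ and $Q^{s}(F_{1},F_{2})=\sqrt{k/(mn)}$, as required.

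There is essentially no hard step here; the only point requiring care is the justification that the matrix entries are genuinely $\{0,1\}$-valued, so that the difference lies in $\{-1,0,1\}$ and both its absolute value and its square collapse into $\{0,1\}$ --- precisely the observation of Remark \ref{mainRemark} against the looser bound $\left\vert F(e_{i})(x_{j})-G(e_{i})(x_{j})\right\vert \leq 1$ used in \cite{sfst08maj}. If one additionally wants to show that every element of the four sets is actually attained (the statement as given only asserts containment), I would fix any $k$ with $0\leq k\leq mn$, select $k$ of the $mn$ index pairs $(i,j)$, define $(F_{1},E)$ to equal $1$ on exactly those pairs (and $0$ elsewhere) and take $(F_{2},E)=(F_{\phi},E)$; then $\Sigma=k$ and all four distances realise the value indexed by $k$.
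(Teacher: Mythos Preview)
Your proposal is correct and follows essentially the same approach as the paper: both arguments rest on the observation that each summand $\left\vert \widehat{F}_{1}(e_{i})(x_{j})-\widehat{F}_{2}(e_{i})(x_{j})\right\vert$ lies in $\{0,1\}$, so the double sum is an integer $k$ between $0$ and $mn$, after which (2) follows from $L^{s}=D^{s}/n$ and (3),(4) from Remark~\ref{mainRemark}. The only cosmetic difference is that the paper separately exhibits the extreme values $k=0$ and $k=mn$ via $(F_{\phi},E)$ and $(F_{X},E)$ before writing the general expression $D^{s}=k/m$, whereas you derive the range $0\leq k\leq mn$ directly from the number of summands; your added remark on attainability of every $k$ is a harmless bonus not needed for the stated containment.
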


\begin{proof}
$\left( 1\right) $ The smallest and the largest distances are given as%
\begin{eqnarray}
D^{s}\left( \left( F_{\phi },E\right) ,\left( F_{\phi },E\right) \right) &=&%
\frac{1}{m}\left\{ \dsum\limits_{i=1}^{m}\dsum\limits_{j=1}^{n}\left\vert
F_{\phi }\left( e_{i}\right) \left( x_{j}\right) -F_{\phi }\left(
e_{i}\right) \left( x_{j}\right) \right\vert \right\} =0.  \TCItag{II}
\label{II} \\
D^{s}\left( \left( F_{\phi },E\right) ,\left( F_{X},E\right) \right) &=&%
\frac{1}{m}\left\{ \dsum\limits_{i=1}^{m}\dsum\limits_{j=1}^{n}\left\vert
F_{\phi }\left( e_{i}\right) \left( x_{j}\right) -F_{X}\left( e_{i}\right)
\left( x_{j}\right) \right\vert \right\}  \notag \\
&=&\frac{1}{m}\left\{ \dsum\limits_{i=1}^{m}\dsum\limits_{j=1}^{n}\left\vert
0-1\right\vert \right\} ,~\text{since }F_{\phi }\left( e_{i}\right) \left(
x_{j}\right) =0~\text{and }F_{X}\left( e_{i}\right) \left( x_{j}\right)
=1~\forall i,j.  \notag \\
&=&\frac{1}{m}\underset{mn\text{ times}}{\underbrace{\left( \left\vert
0-1\right\vert +\left\vert 0-1\right\vert +...+\left\vert 0-1\right\vert
\right) }}=\frac{mn}{m}=n.  \TCItag{III}  \label{III}
\end{eqnarray}%
Furthermore, suppose the arrangement of entries in matrix representation of
two arbitrary soft sets $\left( F_{1},E\right) $ and $\left( F_{2},E\right) $
is such that the term $\left\vert F_{1}\left( e_{i}\right) \left(
x_{j}\right) -F_{2}\left( e_{i}\right) \left( x_{j}\right) \right\vert $
evaluates to $1,$ $k$ times. Then, we can re-arrange the terms in expansion
of $D^{s}\left( F_{1},F_{2}\right) $ to get%
\begin{eqnarray}
D^{s}\left( F_{1},F_{2}\right) &=&\frac{1}{m}\left( \underset{k\text{ times}}%
{\underbrace{\left( \left\vert 0-1\right\vert +\left\vert 1-0\right\vert
+\left\vert 0-1\right\vert +...+\left\vert 1-0\right\vert \right) }}\right. +
\notag \\
&&\left. \underset{mn-k\text{ times}}{\underbrace{\left( \left\vert
0-0\right\vert +\left\vert 1-1\right\vert +\left\vert 0-0\right\vert
+...+\left\vert 1-1\right\vert \right) }}\right)  \notag \\
&=&\frac{1}{m}\left( k+0\right) =\frac{k}{m}.  \TCItag{IV}  \label{IV}
\end{eqnarray}%
By (\ref{II}),(\ref{III}) and (\ref{IV}) we have $D^{s}\left(
F_{1},F_{2}\right) \in \left\{ \frac{k}{m}~|~k=0,1,2,...,mn\right\}
.\bigskip $

$\left( 2\right) $ Note that $L^{s}\left( F_{1},F_{2}\right) =\frac{1}{n}%
D^{s}\left( F_{1},F_{2}\right) .$ The result now follows immediately from $%
\left( 1\right) .\bigskip $

$\left( 3\right) ,\left( 4\right) $ Follow immediately by Remark \ref%
{mainRemark} and $\left( 1\right) $ and $\left( 2\right) .$
\end{proof}

\begin{corollary}
(Lemma 4.4 \cite{sfst08maj}) Let $m=\left\Vert E\right\Vert ,n=\left\Vert
X\right\Vert $. Then for any two soft sets $\left( F_{1},E\right) $ and $%
\left( F_{2},E\right) $ we have

$\left( 1\right) $ $D^{s}\left( F_{1},F_{2}\right) \leq n,$

$\left( 2\right) $ $L^{s}\left( F_{1},F_{2}\right) \leq 1,$

$\left( 3\right) $ $E^{s}\left( F_{1},F_{2}\right) \leq \sqrt{n},$

$\left( 4\right) $ $Q^{s}\left( F_{1},F_{2}\right) \leq 1$\bigskip .
\end{corollary}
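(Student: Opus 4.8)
The plan is to derive the corollary directly from Theorem~\ref{mainResult}, treating it as a routine consequence. For part $(1)$, Theorem~\ref{mainResult}$(1)$ states that $D^{s}\left( F_{1},F_{2}\right) =\frac{k}{m}$ for some integer $k$ with $0\leq k\leq mn$; since $\frac{k}{m}\leq \frac{mn}{m}=n$, the bound $D^{s}\left( F_{1},F_{2}\right) \leq n$ follows at once. For part $(2)$, Theorem~\ref{mainResult}$(2)$ gives $L^{s}\left( F_{1},F_{2}\right) =\frac{k}{mn}$ with $k\leq mn$, so $L^{s}\left( F_{1},F_{2}\right) \leq 1$ immediately. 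Parts $(3)$ and $(4)$ are handled the same way: $E^{s}=\sqrt{k/m}\leq \sqrt{mn/m}=\sqrt{n}$ and $Q^{s}=\sqrt{k/mn}\leq \sqrt{mn/mn}=1$, using the membership statements of Theorem~\ref{mainResult}$(3)$ and $(4)$ (equivalently, one could invoke $E^{s}=\sqrt{D^{s}}$ and $Q^{s}=\sqrt{L^{s}}$ from Remark~\ref{mainRemark} and take square roots of parts $(1)$ and $(2)$, since the square root is monotone).

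The key steps, in order, are: first cite Theorem~\ref{mainResult}$(1)$ to pin down $D^{s}$ as a value of the form $k/m$ with $k\in\{0,1,\dots,mn\}$; second, observe that the maximum over this finite set is attained at $k=mn$, giving the value $n$; third, repeat for $L^{s}$ using part $(2)$; fourth, obtain the Euclidean bounds either from parts $(3)$--$(4)$ directly or via Remark~\ref{mainRemark} together with monotonicity of $t\mapsto\sqrt{t}$ on $[0,\infty)$.

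Honestly, there is no real obstacle here — the content of the corollary is strictly weaker than the theorem it follows, since Theorem~\ref{mainResult} identifies the \emph{exact} set of attainable values whereas the corollary only records the supremum of each such set. The only thing to be careful about is the degenerate case where the relevant soft space forces $k$ to be small, but the inequalities are upper bounds, so smaller $k$ only makes them easier; and one should note in passing (as the proof of Theorem~\ref{mainResult} already does via equations~(\ref{II}) and~(\ref{III})) that the extreme value $k=mn$ is actually realized, by the pair $\left( F_{\phi },E\right) ,\left( F_{X},E\right)$, so the bounds are tight. I would therefore write the proof as a two-line deduction for each part, explicitly flagging that it is merely the statement "$\max_{0\le k\le mn} k/m = n$" and its analogues.
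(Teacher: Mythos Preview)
Your proposal is correct and follows exactly the approach the paper intends: the corollary is stated immediately after Theorem~\ref{mainResult} with no separate proof, precisely because each bound is just the maximum of the finite value-set identified in the theorem. Your deduction --- reading off $\max_{0\le k\le mn}k/m=n$ and its analogues, with the optional shortcut via Remark~\ref{mainRemark} for parts $(3)$ and $(4)$ --- is the intended (and only reasonable) argument.
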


\section{\textbf{Motivation for Introducing New Distance and Similarity
Measure\protect\bigskip s}}

We first define the notion of soft space:

\begin{definition}
Let $X$ be a universe and $E$ a set of attributes. Then the pair $\left(
X,E\right) ,$ called a soft space, is the collection of all soft sets on $X$
with attributes from $E$.
\end{definition}

The work of Majumdar and Samanta depends solely upon the tacit assumption
that matrix representation of soft sets is a suitable representation. We now
discuss the validity of this assumption.

Tabular representation of a soft set was first proposed by Maji, Biswas and
Roy in \cite{sfst03maj}. This representation readily lends itself to become
Majumdar and Samanta's matrix representation as given in \cite{sfst08maj}.
Hence, in the sequal, we shall use the words `table representation' and
`matrix representation' interchangeably. Furthermore, we shall term a soft
set in a soft space $\left( X,E\right) $ as `total soft set' if the soft
set, which is a mapping, is defined on each point of the universe of
attributes $E.$ Hence $\left( F,E\right) $ is a total soft set in the soft
space $\left( X,E\right) ,$ but $\left( G,B\right) $, with $B\subset E,$ is
not.

It is noteworthy that the matrix representation compels one to write every
soft set as a total soft set. Consequently neither matrix representation is
unique, nor it returns the original soft set. This is shown by the following
example:

\begin{example}
Let $\left( X,E\right) $ be a soft space with $X=\left\{ a,b,c\right\} $ and 
$E=\left\{ e_{1},e_{2},e_{3}\right\} .$ Choose%
\begin{equation*}
\left( F,A\right) =\left\{ e_{1}=\left\{ a,c\right\} ,e_{3}=\left\{
b,c\right\} \right\} ,
\end{equation*}%
then its matrix representation is given as%
\begin{equation*}
F^{\prime }=\left( 
\begin{array}{ccc}
1 & 0 & 0 \\ 
0 & 0 & 1 \\ 
1 & 0 & 1%
\end{array}%
\right) .
\end{equation*}%
If we try to retrieve, the soft set $\left( F,A\right) $ from $F^{\prime }$
we get%
\begin{equation*}
\left( G,E\right) =\left\{ e_{1}=\left\{ a,c\right\} ,e_{2}=\left\{
{}\right\} ,e_{3}=\left\{ b,c\right\} \right\} ,
\end{equation*}%
which is clearly a different soft set in $\left( X,E\right) $ as $\left(
F,A\right) ~\widetilde{\subseteq }~\left( G,E\right) $ but $\left(
G,E\right) ~\widetilde{\not\subseteq }~\left( F,A\right) $ and hence $\left(
F,A\right) \neq \left( G,E\right) .$
\end{example}

Moreover, it is evident by the very definition of soft union as given by
Maji \textit{et.al.} that total soft sets are not meant by either Molodstov 
\cite{sfst99mol} or Maji, Biswas and Roy \cite{sfst03maj}. Had this been the
case, the soft union should not have been defined in three pieces.

Furthermore, it is important to note that in \cite{sfst08maj}, while
calculating the similarity only the value sets of a soft set have been paid
attention to. Whereas, ideally a similarity measure for soft sets must
reflect similarity between both the value sets and the attributes, due to
the peculiar dependance of the notion of soft set upon these two sets.

Both the above given points viz. non-suitability of matrix representation
and partial nature of similarity measures of Majumdar and Samanta, provide
us motivation to introduce more suitable distance and similarity measures of
soft sets. We introduce these measures in the following sections.\bigskip

\section{\textbf{Distance Between Soft Sets\protect\bigskip }}

Recall that symmetric difference between two sets $A$ and $B$ is denoted and
defined as:%
\begin{equation*}
A\Delta B=\left( A\cup B\right) \backslash \left( A\cap B\right) .
\end{equation*}%
We first define:

\begin{definition}
Let $\left( F,A\right) ,\left( G,B\right) $ and $\left( H,C\right) $ be soft
sets in a soft space $\left( X,E\right) $ and $d:X\times X\rightarrow 
\mathbb{R}^{\mathbb{+}}$ a mapping. Then

$\left( 1\right) $ $d$ is said to be quasi-metric if it satisfies

~$~~~~~~~~~~~~~\left( M_{1}\right) ~d\left( \left( F,A\right) ,\left(
G,B\right) \right) \geq 0,$

~$~~~~~~~~~~~~~\left( M_{2}\right) ~d\left( \left( F,A\right) ,\left(
G,B\right) \right) =d\left( \left( G,B\right) ,\left( F,A\right) \right) ,$

$\left( 2\right) $ A quasi-metric $d$ is said to be semi-metric if

~$~~~~~~~~~~~~~\left( M_{3}\right) ~d\left( \left( F,A\right) ,\left(
G,B\right) \right) +d\left( \left( G,B\right) ,\left( H,C\right) \right)
\geq d\left( \left( F,A\right) ,\left( H,C\right) \right) $

$\left( 3\right) $ A semi-metric $d$ is said to be pseudo metric if

~$~~~~~~~~~~~~~\left( M_{4}\right) ~\left( F,A\right) =\left( G,B\right)
\Rightarrow d\left( \left( F,A\right) ,\left( G,B\right) \right) =0,$

$\left( 4\right) $ A pseudo metric $d$ is said to be metric if

~$~~~~~~~~~~~~~\left( M_{5}\right) ~d\left( \left( F,A\right) ,\left(
G,B\right) \right) =0\Rightarrow \left( F,A\right) =\left( G,B\right) .$
\end{definition}

Some quasi-metrics and semi-metrics for soft sets may readily be defined as
follows:

\begin{definition}
For two soft sets $\left( F,A\right) $ and $\left( G,B\right) $ in a soft
space $\left( X,E\right) ,$ where $A$ and $B$ are not identically void$,$ we
define Hamming quasi-metric as:%
\begin{equation*}
d\left( \left( F,A\right) ,\left( G,B\right) \right) =\left\Vert A\Delta
B\right\Vert +\sum_{\varepsilon \in A\cap B}\left\Vert F\left( \varepsilon
\right) \Delta G\left( \varepsilon \right) \right\Vert ,
\end{equation*}%
and Normalized Hamming quasi-metric as:%
\begin{eqnarray*}
l\left( \left( F,A\right) ,\left( G,B\right) \right) &=&\frac{\left\Vert
A\Delta B\right\Vert }{\left\Vert A\cup B\right\Vert }+\sum_{\varepsilon \in
A\cap B}\chi \left( \varepsilon \right) \\
\text{where }\chi \left( \varepsilon \right) &=&\left\{ 
\begin{tabular}{ll}
$\frac{\left\Vert F\left( \varepsilon \right) \Delta G\left( \varepsilon
\right) \right\Vert }{\left\Vert F\left( \varepsilon \right) \cup G\left(
\varepsilon \right) \right\Vert },$ & if $F\left( \varepsilon \right) \cup
G\left( \varepsilon \right) \neq \phi $ \\ 
&  \\ 
\multicolumn{1}{c}{$0,$} & otherwise.%
\end{tabular}%
\right.
\end{eqnarray*}
\end{definition}

\begin{definition}
For two soft sets $\left( F,A\right) $ and $\left( G,B\right) $ in a soft
space $\left( X,E\right) $, we define Cardinality semi-metric as: 
\begin{equation*}
c\left( \left( F,A\right) ,\left( G,B\right) \right) =\left\vert \left\Vert
A\right\Vert -\left\Vert B\right\Vert \right\vert +\sum_{\varepsilon \in
A\cap B}\left\vert \left\Vert F\left( \varepsilon \right) \right\Vert
-\left\Vert G\left( \varepsilon \right) \right\Vert \right\vert ,
\end{equation*}%
and Normalized Cardinality semi-metric as:%
\begin{equation*}
p\left( \left( F,A\right) ,\left( G,B\right) \right) =\frac{\left\vert
\left\Vert A\right\Vert -\left\Vert B\right\Vert \right\vert }{\left\Vert
E\right\Vert }+\sum_{\varepsilon \in A\cap B}\frac{\left\vert \left\Vert
F\left( \varepsilon \right) \right\Vert -\left\Vert G\left( \varepsilon
\right) \right\Vert \right\vert }{\left\Vert X\right\Vert }.
\end{equation*}
\end{definition}

Following example shows that $d$ and $l$ are quasi-metrics and $c$ and $p$
are semi-metrics, only:

\begin{example}
Let $\left( X,E\right) $ be a soft space with $X=\left\{ a,b,c,d\right\} $
and $E=\left\{ e_{1},e_{2},e_{3},e_{4}\right\} .$ We choose following soft
sets in $\left( X,E\right) :$ 
\begin{eqnarray*}
\left( F,A\right) &=&\left\{
e_{1}=\{c,b\},e_{2}=\{b\},e_{3}=\{a,b,c\},e_{4}=\{d\}\right\} , \\
\left( G,B\right) &=&\left\{ e_{2}=\{b,c\},e_{3}=\{a,b,c,d\}\right\} , \\
\left( H,C\right) &=&\left\{
e_{1}=\{b,d\},e_{2}=\{b,c,d\},e_{3}=\{a,d\},e_{4}=\{a,b,c,d\}\right\} .
\end{eqnarray*}%
Then calculations give%
\begin{equation*}
\begin{tabular}{lll}
$d\left( \left( F,A\right) ,\left( G,B\right) \right) =4$ & $d\left( \left(
G,B\right) ,\left( H,C\right) \right) =5$ & $d\left( \left( F,A\right)
,\left( H,C\right) \right) =10$ \\ 
$l\left( \left( F,A\right) ,\left( G,B\right) \right) =\frac{5}{4}$ & $%
l\left( \left( G,B\right) ,\left( H,C\right) \right) =\frac{4}{3}$ & $%
l\left( \left( F,A\right) ,\left( H,C\right) \right) =\frac{17}{6}$%
\end{tabular}%
\end{equation*}%
and hence%
\begin{eqnarray*}
d\left( \left( F,A\right) ,\left( G,B\right) \right) +d\left( \left(
G,B\right) ,\left( H,C\right) \right) &=&4+5\not\geq 10=d\left( \left(
F,A\right) ,\left( H,C\right) \right) , \\
l\left( \left( F,A\right) ,\left( G,B\right) \right) +l\left( \left(
G,B\right) ,\left( H,C\right) \right) &=&\frac{5}{4}+\frac{4}{3}\not\geq 
\frac{17}{6}=l\left( \left( F,A\right) ,\left( H,C\right) \right) .
\end{eqnarray*}%
Again choose $X=\left\{ a,b,c\right\} $, $E=\left\{
e_{1},e_{2},e_{3},e_{4}\right\} \ $and soft sets: 
\begin{eqnarray*}
\left( F,A\right) &=&\left\{
e_{3}=\{c\},e_{4}=\{a\},e_{1}=\{c,b,a\},e_{2}=\{b,a\}\right\} , \\
\left( G,B\right) &=&\left\{
e_{3}=\{c,b\},e_{4}=\{c,b\},e_{2}=\{b,a\}\right\} , \\
\left( H,C\right) &=&\left\{
e_{4}=\{c,b,a\},e_{1}=\{c\},e_{2}=\{b,a\}\right\} .
\end{eqnarray*}%
Then we get%
\begin{eqnarray*}
c\left( \left( F,A\right) ,\left( G,B\right) \right) +c\left( \left(
G,B\right) ,\left( H,C\right) \right) &=&3+1\not\geq 5=c\left( \left(
F,A\right) ,\left( H,C\right) \right) , \\
p\left( \left( F,A\right) ,\left( G,B\right) \right) +p\left( \left(
G,B\right) ,\left( H,C\right) \right) &=&\frac{11}{12}+\frac{1}{3}\not\geq 
\frac{19}{12}=p\left( \left( F,A\right) ,\left( H,C\right) \right) .
\end{eqnarray*}%
Moreover $c$ and $p$ fail to satisfy%
\begin{eqnarray*}
c\left( \left( F,A\right) ,\left( G,B\right) \right) &=&0\iff \left(
F,A\right) =\left( G,B\right) , \\
p\left( \left( F,A\right) ,\left( G,B\right) \right) &=&0\iff \left(
F,A\right) =\left( G,B\right) .
\end{eqnarray*}%
For this choose%
\begin{eqnarray*}
\left( F,A\right) &=&\left\{ e_{4}=\{b,a\},e_{1}=\{\}\right\} , \\
\left( G,B\right) &=&\left\{ e_{3}=\{b\},e_{4}=\{c,b\}\right\} ,
\end{eqnarray*}%
then%
\begin{equation*}
c\left( \left( F,A\right) ,\left( G,B\right) \right) =0=p\left( \left(
F,A\right) ,\left( G,B\right) \right) .
\end{equation*}
\end{example}

\begin{definition}
\label{my-distances}For two soft sets $\left( F,A\right) $ and $\left(
G,B\right) $ in a soft space $\left( X,E\right) ,$ where $A$ and $B$ are not
identically void$,$ we define Euclidean distance as:%
\begin{equation*}
e\left( \left( F,A\right) ,\left( G,B\right) \right) =\left\Vert A\Delta
B\right\Vert +\sqrt{\sum_{\varepsilon \in A\cap B}\left\Vert F\left(
\varepsilon \right) \Delta G\left( \varepsilon \right) \right\Vert ^{2}},
\end{equation*}%
Normalized Euclidean distance as:%
\begin{eqnarray*}
q\left( \left( F,A\right) ,\left( G,B\right) \right) &=&\frac{\left\Vert
A\Delta B\right\Vert }{\sqrt{\left\Vert A\cup B\right\Vert }}+\sqrt{%
\sum_{\varepsilon \in A\cap B}\chi \left( \varepsilon \right) }. \\
\text{where }\chi \left( \varepsilon \right) &=&\left\{ 
\begin{tabular}{ll}
$\frac{\left\Vert F\left( \varepsilon \right) \Delta G\left( \varepsilon
\right) \right\Vert ^{2}}{\left\Vert F\left( \varepsilon \right) \cup
G\left( \varepsilon \right) \right\Vert },$ & if $F\left( \varepsilon
\right) \cup G\left( \varepsilon \right) \neq \phi $ \\ 
&  \\ 
\multicolumn{1}{c}{$0,$} & otherwise%
\end{tabular}%
\right. ,
\end{eqnarray*}%
where all the radicals yield non-negative values only.
\end{definition}

\begin{proposition}
The mappings $e,q:\left( X,E\right) \times \left( X,E\right) \rightarrow 
\mathbb{R}^{+},$ as defined above, are metrics.\bigskip
\end{proposition}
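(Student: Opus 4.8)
The plan is to verify the five conditions $(M_1)$--$(M_5)$ for $e$ and then for $q$, disposing of $(M_1),(M_2),(M_4),(M_5)$ quickly and concentrating the work on the triangle inequality $(M_3)$. For $(M_1)$, $e((F,A),(G,B))$ is the sum of a cardinality and a non-negative radical, hence $\ge 0$, and likewise for $q$. For $(M_2)$, symmetry is immediate from $A\Delta B=B\Delta A$, $A\cap B=B\cap A$, $A\cup B=B\cup A$ and $F(\varepsilon)\Delta G(\varepsilon)=G(\varepsilon)\Delta F(\varepsilon)$ for every $\varepsilon$. For $(M_4)$, if $(F,A)=(G,B)$ then $A=B$ gives $\|A\Delta B\|=0$, while $F(\varepsilon)=G(\varepsilon)$ for all $\varepsilon\in A=A\cap B$ kills every inner term, so $e=q=0$. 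For $(M_5)$, if $e((F,A),(G,B))=0$ then, both summands being non-negative, $\|A\Delta B\|=0$ forces $A=B$ and $\sum_{\varepsilon\in A\cap B}\|F(\varepsilon)\Delta G(\varepsilon)\|^{2}=0$ forces $F(\varepsilon)=G(\varepsilon)$ for every $\varepsilon\in A\cap B=A=B$, so $(F,A)=(G,B)$; the same runs for $q$, noting that an $\varepsilon$ with $F(\varepsilon)\cup G(\varepsilon)=\phi$ already has $F(\varepsilon)=G(\varepsilon)=\phi$, so the ``otherwise'' branch of $\chi$ is harmless, and that the hypothesis on $A,B$ keeps the denominators positive.

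For $(M_3)$, fix $(F,A),(G,B),(H,C)$ and split $e((F,A),(H,C))$ into its attribute part $\|A\Delta C\|$ and its value part $V:=\bigl(\sum_{\varepsilon\in A\cap C}\|F(\varepsilon)\Delta H(\varepsilon)\|^{2}\bigr)^{1/2}$. For the attribute part, the counting measure of symmetric difference is a metric on $P(E)$, so $\|A\Delta C\|\le\|A\Delta B\|+\|B\Delta C\|$; sharper, $\|A\Delta B\|+\|B\Delta C\|-\|A\Delta C\|=2\,\|(A\Delta B)\cap(B\Delta C)\|$, a surplus I intend to reuse. For the value part I would write $A\cap C=(A\cap B\cap C)\,\cup\,\bigl((A\cap C)\setminus B\bigr)$: on $A\cap B\cap C$ the pointwise inequality $\|F(\varepsilon)\Delta H(\varepsilon)\|\le\|F(\varepsilon)\Delta G(\varepsilon)\|+\|G(\varepsilon)\Delta H(\varepsilon)\|$ followed by Minkowski's inequality for $\ell^{2}$ bounds that portion by $\bigl(\sum_{A\cap B}\|F(\varepsilon)\Delta G(\varepsilon)\|^{2}\bigr)^{1/2}+\bigl(\sum_{B\cap C}\|G(\varepsilon)\Delta H(\varepsilon)\|^{2}\bigr)^{1/2}$, i.e.\ by the two value parts on the left-hand side; each remaining index $\varepsilon\in(A\cap C)\setminus B$ lies in $(A\Delta B)\cap(B\Delta C)$ and so carries two units of the attribute surplus, which I would charge against $\|F(\varepsilon)\Delta H(\varepsilon)\|\le\|X\|$, reassembling via $\sqrt{a+b}\le\sqrt a+\sqrt b$.

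The main obstacle is exactly this reassembly. The three inner radicals range over the \emph{different} index sets $A\cap B$, $B\cap C$, $A\cap C$, and an attribute lying in $A\cap C$ but outside $B$ contributes its full Hamming weight $\|F(\varepsilon)\Delta H(\varepsilon)\|$ to the right-hand side while contributing only the constant $2$ (through $\|A\Delta B\|+\|B\Delta C\|$) to the left. Showing that the surplus $2\,\|(A\Delta B)\cap(B\Delta C)\|$, amplified by the contraction built into the inner $\ell^{2}$ norm, always absorbs these stray indices is the delicate heart of the argument, and is where I expect to spend essentially all of the effort — the more so because the example preceding the statement shows that the Hamming analogue $d$ (an inner $\ell^{1}$ sum in place of the $\ell^{2}$ one) \emph{fails} $(M_3)$, so any correct proof must genuinely exploit the Euclidean inner norm rather than merely transcribe the computation for $d$. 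Once $(M_3)$ holds for $e$, the same scheme should handle $q$, with the added bookkeeping of comparing the normalizers $\sqrt{\|A\cup B\|}$, $\sqrt{\|B\cup C\|}$, $\sqrt{\|A\cup C\|}$ and the denominators $\|F(\varepsilon)\cup G(\varepsilon)\|$ appearing in $\chi$.
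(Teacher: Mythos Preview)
There is nothing to compare against: the paper states this proposition without proof. More importantly, the proposition is \emph{false}, and the ``delicate heart'' you isolated is not a difficulty to be overcome but a genuine obstruction.

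Take $X=\{x_{1},\dots ,x_{n}\}$ with $n\geq 5$, $E=\{e_{1},e_{2}\}$, and
\[
(F,A)=\{e_{1}=\phi\},\qquad (G,B)=\{e_{2}=\phi\},\qquad (H,C)=\{e_{1}=X\}.
\]
Then $A\cap B=B\cap C=\phi$, so the radical in $e$ vanishes on both left-hand terms and
\[
e\bigl((F,A),(G,B)\bigr)=\|A\Delta B\|=2,\qquad e\bigl((G,B),(H,C)\bigr)=\|B\Delta C\|=2,
\]
while $A=C$ gives
\[
e\bigl((F,A),(H,C)\bigr)=\sqrt{\|F(e_{1})\Delta H(e_{1})\|^{2}}=\|X\|=n.
\]
Hence $e((F,A),(G,B))+e((G,B),(H,C))=4<n=e((F,A),(H,C))$ for $n\geq 5$, and $(M_{3})$ fails. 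The same data with $n\geq 9$ breaks $q$: one gets $q((F,A),(G,B))=q((G,B),(H,C))=2/\sqrt{2}=\sqrt{2}$ but $q((F,A),(H,C))=\sqrt{n^{2}/n}=\sqrt{n}>2\sqrt{2}$.

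Your diagnosis was exactly right: an attribute $\varepsilon\in (A\cap C)\setminus B$ puts the full weight $\|F(\varepsilon)\Delta H(\varepsilon)\|\leq\|X\|$ into $e((F,A),(H,C))$ while contributing only the constant $2$ to $\|A\Delta B\|+\|B\Delta C\|$. The ``surplus'' you hoped to reuse is bounded independently of $\|X\|$, so it cannot absorb these stray indices in general. Replacing the inner $\ell^{1}$ of $d$ by the $\ell^{2}$ of $e$ does not help, because the failure mechanism here is the mismatch between the attribute sets, not the aggregation of the value-set discrepancies. The plan for $(M_{1}),(M_{2}),(M_{4}),(M_{5})$ is fine; it is $(M_{3})$ that simply does not hold.
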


\begin{lemma}
\label{lm-myDistances}For the soft sets $\left( F_{\phi },E\right) $,$\left(
F_{X},E\right) $ and an arbitrary soft set $\left( F,A\right) $ in a soft
space $\left( X,E\right) $, we have:

$\left( 1\right) $ $e\left( \left( F,A\right) ,\left( F,A\right) ^{c}\right)
=2\left\Vert A\right\Vert ,$

$\left( 2\right) $ $q\left( \left( F,A\right) ,\left( F,A\right) ^{c}\right)
=\sqrt{2\left\Vert A\right\Vert },$

$\left( 3\right) $ $e\left( \left( F_{\phi },E\right) ,\left( F_{X},E\right)
\right) =\sqrt{\left\Vert E\right\Vert \left\Vert X\right\Vert },$

$\left( 4\right) $ $q\left( \left( F_{\phi },E\right) ,\left( F_{X},E\right)
\right) =\sqrt{\left\Vert E\right\Vert }.\bigskip $
\end{lemma}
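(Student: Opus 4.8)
The plan is to verify each of the four identities by direct substitution into Definition~\ref{my-distances}, exploiting the structure of the particular soft sets involved. The key observations are: (i) for the complement $(F,A)^c = (F^c, A)$ the attribute set is unchanged, so $A \Delta A = \phi$ and $A \cap A = A$; and (ii) for $(F_\phi, E)$ and $(F_X, E)$ both are total soft sets on $E$, so again the symmetric difference of attribute sets is empty and $E \cap E = E$.

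For parts $(1)$ and $(2)$, I would first record that $F(\varepsilon) \Delta F^c(\varepsilon) = F(\varepsilon) \Delta (X \setminus F(\varepsilon)) = X$ for every $\varepsilon \in A$, hence $\left\Vert F(\varepsilon) \Delta F^c(\varepsilon)\right\Vert = \left\Vert X\right\Vert = n$, and also $F(\varepsilon) \cup F^c(\varepsilon) = X$ so $\chi(\varepsilon) = n^2/n = n$ in the normalized case. Plugging into $e$ gives $\left\Vert A \Delta A\right\Vert + \sqrt{\sum_{\varepsilon \in A} n^2} = 0 + \sqrt{\left\Vert A\right\Vert\, n^2} = n\sqrt{\left\Vert A\right\Vert}$; wait — I should double-check this against the claimed value $2\left\Vert A\right\Vert$, since these agree only when $n = 2\sqrt{\left\Vert A\right\Vert}$, which is not generally true. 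So the honest plan is: compute $e$ and $q$ straightforwardly from the definitions, then reconcile with the stated right-hand sides; if the paper intends the universe to have a fixed small size or is implicitly normalizing $\left\Vert X\right\Vert$, I would flag that assumption explicitly. Most likely the intended reading is that $\left\Vert F(\varepsilon)\Delta F^c(\varepsilon)\right\Vert$ contributes a factor that, after the square root over $\left\Vert A\right\Vert$ summands, telescopes to the stated closed form under the conventions in force, and I would carry the computation through transparently so the reader can see exactly where each factor comes from.

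For parts $(3)$ and $(4)$, the computation is cleaner: $F_\phi(\varepsilon) = \phi$ and $F_X(\varepsilon) = X$ for all $\varepsilon \in E$, so $F_\phi(\varepsilon) \Delta F_X(\varepsilon) = X$, giving $\left\Vert F_\phi(\varepsilon) \Delta F_X(\varepsilon)\right\Vert = n$ for each of the $m = \left\Vert E\right\Vert$ attributes. Since $E \Delta E = \phi$, the attribute term vanishes and $e\left((F_\phi, E),(F_X,E)\right) = \sqrt{\sum_{\varepsilon \in E} n^2} = \sqrt{m n^2} = n\sqrt{m}$; comparing with the claimed $\sqrt{mn}$ again exposes a discrepancy unless $n = 1$ or a different normalization of the per-attribute term is intended. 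For $q$, using $F_\phi(\varepsilon) \cup F_X(\varepsilon) = X$ gives $\chi(\varepsilon) = n^2/n = n$, whence $q = 0/\sqrt{m} + \sqrt{\sum_{\varepsilon \in E} n} = \sqrt{mn}$ — which contradicts the claimed $\sqrt{m}$ by a factor $\sqrt{n}$. The resolution I would adopt is to recheck whether $\chi$ for the Euclidean case should read $\left\Vert F(\varepsilon)\Delta G(\varepsilon)\right\Vert^2 / \left\Vert F(\varepsilon)\cup G(\varepsilon)\right\Vert^2$ (squared denominator), in which case $\chi(\varepsilon) = 1$ for disjoint-complement pairs and $q\left((F_\phi,E),(F_X,E)\right) = \sqrt{m}$ exactly as stated; I suspect the squared denominator is the intended definition and the excerpt has a typo, so my proof would proceed under that corrected reading and note it.

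The main obstacle, then, is not any deep argument — each part is a one-line substitution once the set-theoretic facts ($A\Delta A = \phi$, $F\Delta F^c = X$, $F_\phi \Delta F_X = X$) are in hand — but rather pinning down the precise normalization conventions in Definition~\ref{my-distances} so that the arithmetic matches the stated closed forms. I would therefore structure the proof as: (a) state the three elementary set identities as a preliminary observation; (b) substitute into $e$ and $q$ for the complement pair to obtain $(1)$ and $(2)$; (c) substitute into $e$ and $q$ for the null/whole pair to obtain $(3)$ and $(4)$; and (d) at each step make the normalizing factor explicit so that any reader can audit the constant. If after careful bookkeeping a genuine factor mismatch remains, the correct move is to report it as an erratum in the spirit of the paper's own corrections to \cite{sfst08maj}, rather than to paper over it.
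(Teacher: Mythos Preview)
The paper provides no proof of this lemma; it is simply stated and then used to justify the two propositions in Section~6. Your approach --- direct substitution of the elementary identities $A\Delta A=\phi$, $F(\varepsilon)\Delta F^{c}(\varepsilon)=X$, $F_{\phi}(\varepsilon)\Delta F_{X}(\varepsilon)=X$ into Definition~\ref{my-distances} --- is the only natural one, and your computations are correct.

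More importantly, the discrepancies you flagged are real, not artefacts of misreading. With $m=\Vert E\Vert$ and $n=\Vert X\Vert$, the definitions as written give
\[
e\bigl((F,A),(F,A)^{c}\bigr)=n\sqrt{\Vert A\Vert},\qquad
q\bigl((F,A),(F,A)^{c}\bigr)=\sqrt{n\Vert A\Vert},
\]
\[
e\bigl((F_{\phi},E),(F_{X},E)\bigr)=n\sqrt{m},\qquad
q\bigl((F_{\phi},E),(F_{X},E)\bigr)=\sqrt{mn},
\]
none of which match the stated right-hand sides in general. Your conjectured fix (squaring the denominator in $\chi$) repairs $(4)$ but not the others; no single typographical correction to Definition~\ref{my-distances} makes all four parts come out as claimed. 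Since the downstream Propositions in Section~6 are derived from this lemma by the transformations $S_{K}=1/(1+d)$ and $S_{W}=e^{-\alpha d}$, those formulae inherit the same issue.

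Your plan in item~(d) --- carry the arithmetic through transparently and report the mismatch as an erratum rather than force agreement --- is exactly right. There is no missing idea in your argument; the obstacle is an inconsistency in the source, not in your reasoning.
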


\section{\textbf{Some New Similarity Measures\protect\bigskip }}

\begin{definition}
A mapping $S:\left( X,E\right) \times \left( X,E\right) \rightarrow \left[
0,1\right] $ is said to be similarity measure if its value $S\left( \left(
F,A\right) ,\left( G,B\right) \right) ,$ for arbitrary soft sets $\left(
F,A\right) $ and $\left( G,B\right) $ in the soft space $\left( X,E\right) $%
, satisfies following axioms:

\begin{description}
\item[$\left( s1\right) $] $0\leq S\left( \left( F,A\right) ,\left(
G,B\right) \right) \leq 1,$

\item[$\left( s2\right) $] if $\left( F,A\right) =\left( G,B\right) ,$ then $%
S\left( \left( F,A\right) ,\left( G,B\right) \right) =1,$

\item[$\left( s3\right) $] $S\left( \left( F,A\right) ,\left( G,B\right)
\right) =S\left( \left( G,B\right) ,\left( F,A\right) \right) ,$

\item[$\left( s4\right) $] if $\left( F,A\right) ~\widetilde{\subseteq }%
~\left( G,B\right) $ and $\left( G,B\right) ~\widetilde{\subseteq }~\left(
H,C\right) ,$ then $S\left( \left( F,A\right) ,\left( H,C\right) \right)
\leq S\left( \left( F,A\right) ,\left( G,B\right) \right) $ and $S\left(
\left( F,A\right) ,\left( H,C\right) \right) \leq S\left( \left( G,B\right)
,\left( H,C\right) \right) .$
\end{description}
\end{definition}

\begin{definition}
For two soft sets $\left( F,A\right) $ and $\left( G,B\right) $ in a soft
space $\left( X,E\right) ,$ we define a set theoretic matching function
similarity measure as:%
\begin{equation*}
M\left( \left( F,A\right) ,\left( G,B\right) \right) =\frac{\left\Vert A\cap
B\right\Vert }{\max \left( \left\Vert A\right\Vert ,\left\Vert B\right\Vert
\right) }+\frac{\underset{\varepsilon \in A\cap B}{\dsum }\left\Vert F\left(
\varepsilon \right) \cap G\left( \varepsilon \right) \right\Vert }{\underset{%
\varepsilon \in A\cap B}{\dsum }\max \left( \left\Vert F\left( \varepsilon
\right) \right\Vert ,\left\Vert G\left( \varepsilon \right) \right\Vert
\right) }.
\end{equation*}
\end{definition}

\begin{proposition}
For the soft sets $\left( F_{\phi },E\right) $, $\left( F_{X},E\right) $ and
an arbitrary soft set $\left( F,A\right) $ in a soft space $\left(
X,E\right) $, we have:

$\left( 1\right) $ $M\left( \left( F,A\right) ,\left( F,A\right) ^{c}\right)
=0,$

$\left( 2\right) $ $M\left( \left( F_{\phi },E\right) ,\left( F_{X},E\right)
\right) =1.$
\end{proposition}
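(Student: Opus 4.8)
The plan is to verify both equalities by substituting the given soft sets straight into the defining formula for $M$ and simplifying, treating its two summands — the attribute ratio $\left\Vert A\cap B\right\Vert /\max(\left\Vert A\right\Vert ,\left\Vert B\right\Vert )$ and the value ratio assembled from the $\left\Vert F(\varepsilon)\cap G(\varepsilon)\right\Vert$ — separately. One preliminary point I would settle first: that the denominators occurring in $M$ are nonzero on the soft sets at hand (in particular that the relevant attribute set is not void), so that the expression is genuinely defined and no indeterminate $0/0$ is hidden.

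Part $(2)$ comes out almost at once. Since $(F_{\phi},E)$ and $(F_{X},E)$ carry the same attribute set $E$, we have $A\cap B=E$ and the attribute ratio is $\left\Vert E\right\Vert /\max(\left\Vert E\right\Vert ,\left\Vert E\right\Vert )=1$. For the value ratio, $F_{\phi}(\varepsilon)=\phi$ and $F_{X}(\varepsilon)=X$ for every $\varepsilon\in E$ give $F_{\phi}(\varepsilon)\cap F_{X}(\varepsilon)=\phi$, so its numerator $\sum_{\varepsilon\in E}\left\Vert F_{\phi}(\varepsilon)\cap F_{X}(\varepsilon)\right\Vert$ is $0$, while its denominator $\sum_{\varepsilon\in E}\max(\left\Vert F_{\phi}(\varepsilon)\right\Vert ,\left\Vert F_{X}(\varepsilon)\right\Vert )=\left\Vert E\right\Vert \left\Vert X\right\Vert$ is positive. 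Hence the value ratio is $0$ and $M((F_{\phi},E),(F_{X},E))=1+0=1$.

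Part $(1)$ rests on the elementary identity $F(\varepsilon)\cap F^{c}(\varepsilon)=F(\varepsilon)\cap(X\setminus F(\varepsilon))=\phi$, valid for every $\varepsilon\in A$. Since $(F,A)^{c}=(F^{c},A)$ again shares the attribute set $A$ with $(F,A)$, the index set $A\cap A$ of the value ratio equals $A$, its numerator $\sum_{\varepsilon\in A}\left\Vert F(\varepsilon)\cap F^{c}(\varepsilon)\right\Vert$ is $0$, and its denominator $\sum_{\varepsilon\in A}\max(\left\Vert F(\varepsilon)\right\Vert ,\left\Vert F^{c}(\varepsilon)\right\Vert )$ is strictly positive because $\left\Vert F(\varepsilon)\right\Vert +\left\Vert F^{c}(\varepsilon)\right\Vert =\left\Vert X\right\Vert \geq 1$ forces one of the two cardinalities to be positive; so the value ratio vanishes. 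It then remains to evaluate the attribute ratio $\left\Vert A\cap A\right\Vert /\max(\left\Vert A\right\Vert ,\left\Vert A\right\Vert )$ and combine it with the vanishing value ratio. This last step — pinning down how the attribute contribution behaves under complementation, which is where the asserted value must originate — is the one place in the whole argument I would want to scrutinize carefully; everything else is mechanical, with no obstacle beyond the $0/0$ bookkeeping noted above.
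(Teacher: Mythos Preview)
The paper states this proposition without proof, so there is no argument to compare yours against. Your mechanical computations are all correct, and your instinct to flag the attribute ratio in part~$(1)$ is exactly right: that is where the claim fails, not where it is rescued.

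Carrying your own computation one more step, the attribute ratio in part~$(1)$ is
\[
\frac{\left\Vert A\cap A\right\Vert}{\max\left(\left\Vert A\right\Vert,\left\Vert A\right\Vert\right)}=\frac{\left\Vert A\right\Vert}{\left\Vert A\right\Vert}=1,
\]
so together with the vanishing value ratio you obtain $M\left((F,A),(F,A)^{c}\right)=1+0=1$, not~$0$. Part~$(1)$ of the proposition is therefore false as written. In fact, once you notice that $(F,A)$ versus $(F,A)^{c}$ and $(F_{\phi},E)$ versus $(F_{X},E)$ are structurally identical situations --- same attribute set on both sides, pointwise disjoint value sets --- it is clear that $M$ must take the \emph{same} value on the two pairs, so they cannot yield $0$ and $1$ respectively under any reading of the formula. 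With the definition of $M$ as a sum of two ratios, both pairs give $M=1$; your verification of part~$(2)$ is thus correct, and part~$(1)$ is simply an error in the paper.
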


Based upon distances, defined in last section (Definition \ref{my-distances}%
), two similarity measure may be introduced, following Koczy \cite{xx00koc},
as:%
\begin{eqnarray*}
S_{K}^{e}\left( \left( F,A\right) ,\left( G,B\right) \right) &=&\frac{1}{%
1+e\left( \left( F,A\right) ,\left( G,B\right) \right) }, \\
S_{K}^{q}\left( \left( F,A\right) ,\left( G,B\right) \right) &=&\frac{1}{%
1+q\left( \left( F,A\right) ,\left( G,B\right) \right) }.
\end{eqnarray*}%
Using the definition of Williams and Steele \cite{xx00wil} we may define
another pair of similarity measures as:%
\begin{eqnarray*}
S_{W}^{e}\left( \left( F,A\right) ,\left( G,B\right) \right) &=&e^{-\alpha
\cdot e\left( \left( F,A\right) ,\left( G,B\right) \right) }, \\
S_{W}^{q}\left( \left( F,A\right) ,\left( G,B\right) \right) &=&e^{-\alpha
\cdot q\left( \left( F,A\right) ,\left( G,B\right) \right) }.
\end{eqnarray*}%
where $\alpha $ is a positive real number (parameter) called the steepness
measure.

\begin{definition}
\cite{sfst08maj} Two soft sets $\left( F,A\right) $ and $\left( G,B\right) $
in a soft space $\left( X,E\right) $ are said to be $\alpha $-similar,
denoted as $\left( F,A\right) \overset{\alpha }{\thickapprox }\left(
G,B\right) $, if 
\begin{equation*}
S\left( \left( F,A\right) ,\left( G,B\right) \right) \geq \alpha \text{ for }%
\alpha \in \left( 0,1\right) ,
\end{equation*}%
where $S$ is a similarity measure.
\end{definition}

\begin{proposition}
$\overset{\alpha }{\thickapprox }$ is reflexive and symmetric.
\end{proposition}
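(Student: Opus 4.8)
The plan is to derive both properties directly from the defining axioms $\left( s1\right) $--$\left( s4\right) $ of a similarity measure, since $\alpha $-similarity is defined purely through the threshold condition $S\left( \left( F,A\right) ,\left( G,B\right) \right) \geq \alpha $. No structural facts about soft sets beyond these axioms are needed.

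First I would establish reflexivity. Fix an arbitrary soft set $\left( F,A\right) $ in the soft space $\left( X,E\right) $. Applying axiom $\left( s2\right) $ with $\left( G,B\right) =\left( F,A\right) $ yields $S\left( \left( F,A\right) ,\left( F,A\right) \right) =1$. Since $\alpha \in \left( 0,1\right) $ we have $\alpha <1$, so $S\left( \left( F,A\right) ,\left( F,A\right) \right) =1\geq \alpha $, which is exactly the condition $\left( F,A\right) \overset{\alpha }{\thickapprox }\left( F,A\right) $. Hence $\overset{\alpha }{\thickapprox }$ is reflexive.

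Next I would establish symmetry. Suppose $\left( F,A\right) \overset{\alpha }{\thickapprox }\left( G,B\right) $, i.e. $S\left( \left( F,A\right) ,\left( G,B\right) \right) \geq \alpha $. By axiom $\left( s3\right) $, $S\left( \left( F,A\right) ,\left( G,B\right) \right) =S\left( \left( G,B\right) ,\left( F,A\right) \right) $, so $S\left( \left( G,B\right) ,\left( F,A\right) \right) \geq \alpha $ as well, which says $\left( G,B\right) \overset{\alpha }{\thickapprox }\left( F,A\right) $. This proves symmetry.

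There is no genuine obstacle here: the statement is an immediate consequence of axioms $\left( s2\right) $ and $\left( s3\right) $, the only point worth flagging being that the strict inequality $\alpha <1$ coming from $\alpha \in \left( 0,1\right) $ is precisely what guarantees the reflexivity threshold is met (an equality $S=1$ would do, but it is cleaner to invoke $\alpha <1$). I would also add a brief remark that transitivity need not hold in general, which is why $\overset{\alpha }{\thickapprox }$ is not asserted to be an equivalence relation; this observation is not required for the proof but clarifies why only reflexivity and symmetry are claimed.
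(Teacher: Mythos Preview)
Your argument is correct and is precisely the natural one: reflexivity follows from axiom $\left( s2\right)$ together with $\alpha<1$, and symmetry from axiom $\left( s3\right)$. The paper itself states the proposition without proof, so there is nothing to compare against; your write-up would serve perfectly well as the omitted justification.
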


Majumdar and Samanta \cite{sfst08maj} have defined the notion of significant
similarity as follows:

\begin{definition}
\cite{sfst08maj} Two soft sets $\left( F,A\right) $ and $\left( G,B\right) $
in a soft space $\left( X,E\right) $ are said to be significantly similar
with respect to the similarity measure $S,$ if $S\left( \left( F,A\right)
,\left( G,B\right) \right) \geq \frac{1}{2}.$
\end{definition}

In the following example we show that two clearly non-similar soft sets come
out to be significantly similar using a Majumdar-Samant similarity measure.
But the same soft sets are rightly discerned as non-significantly similar by
a similarity measure proposed in this work:

\begin{example}
\label{ex_showing_superiority}Let $X=\left\{ a,b,c,d\right\} $ and $%
E=\left\{ e_{1},e_{2},e_{3}\right\} $ and 
\begin{eqnarray*}
\left( F,A\right) &=&\left\{ e_{1}=\left\{ {}\right\} ,e_{2}=\left\{
{}\right\} \right\} , \\
\left( G,B\right) &=&\left\{ e_{2}=\left\{ b,d\right\} \right\} , \\
\left( H,C\right) &=&\left\{ e_{2}=\left\{ b,c,d\right\} \right\} .
\end{eqnarray*}%
It is intuitively clear that $\left( F,A\right) $ and $\left( H,C\right) $
are not similar but $\left( G,B\right) $ and $\left( H,C\right) $ appear to
be considerably similar. We calculate the similarity of both pairs of soft
sets using a Majumdar-Samanta similarity measure $S^{\prime }\left(
F_{1},F_{2}\right) =\frac{1}{1+E^{S}\left( F_{1},F_{2}\right) },$ as follows:%
\begin{eqnarray*}
S^{\prime }\left( \left( F,A\right) ,\left( H,C\right) \right) &=&\frac{1}{2}%
=0.5, \\
S^{\prime }\left( \left( G,B\right) ,\left( H,C\right) \right) &=&\frac{1}{1+%
\frac{\sqrt{3}}{3}}=0.633.
\end{eqnarray*}%
Hence according to $S^{\prime }$ both the soft sets $\left( F,A\right) $ and 
$\left( G,B\right) $ are significantly similar to $\left( H,C\right) ,$
though this conclusion is counter-intuitive. On the other hand using $%
S_{K}^{e}$ (proposed in this work) we calculate similarities as:%
\begin{eqnarray*}
S_{K}^{e}\left( \left( F,A\right) ,\left( H,C\right) \right) &=&\frac{1}{2+%
\sqrt{3}}=0.25\text{,} \\
S_{K}^{e}\left( \left( G,B\right) ,\left( H,C\right) \right) &=&\frac{1}{2}%
=0.5.
\end{eqnarray*}%
Clearly $S_{K}^{e}$ has rightly discerned $\left( F,A\right) ,\left(
H,C\right) $ to be non-significantly similar and $\left( G,B\right) ,\left(
H,C\right) $ as significantly similar.
\end{example}

We now give some interesting properties of the newly introduced similarity
measures in the form of following two propositions. Proofs of these
propositions are straightforward in view of Lemma \ref{lm-myDistances}:

\begin{proposition}
For an arbitrary soft set $\left( F,A\right) $ in a soft space $\left(
X,E\right) $, we have:

$\left( 1\right) $ $S_{K}^{e}\left( \left( F,A\right) ,\left( F,A\right)
^{c}\right) =\frac{1}{1+2\left\Vert A\right\Vert },$

$\left( 2\right) $ $S_{K}^{q}\left( \left( F,A\right) ,\left( F,A\right)
^{c}\right) =\frac{1}{1+\sqrt{2\left\Vert A\right\Vert }},$

$\left( 3\right) $ $S_{W}^{e}\left( \left( F,A\right) ,\left( F,A\right)
^{c}\right) =e^{-2\left\Vert A\right\Vert \alpha },$

$\left( 4\right) $ $S_{W}^{q}\left( \left( F,A\right) ,\left( F,A\right)
^{c}\right) =e^{-\sqrt{2\left\Vert A\right\Vert }\alpha }.\bigskip $
\end{proposition}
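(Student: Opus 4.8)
The plan is to observe that the four quantities in question are, by construction, fixed monotone functions of the distances $e$ and $q$ evaluated at the pair $(F,A),(F,A)^{c}$, so that the proposition reduces to a single substitution into Lemma \ref{lm-myDistances}. Concretely, I would first recall the defining formulas $S_{K}^{e}\!\left((F,A),(G,B)\right)=\frac{1}{1+e\left((F,A),(G,B)\right)}$, $S_{K}^{q}\!\left((F,A),(G,B)\right)=\frac{1}{1+q\left((F,A),(G,B)\right)}$, $S_{W}^{e}\!\left((F,A),(G,B)\right)=e^{-\alpha\cdot e\left((F,A),(G,B)\right)}$ and $S_{W}^{q}\!\left((F,A),(G,B)\right)=e^{-\alpha\cdot q\left((F,A),(G,B)\right)}$, and then note that the right‑hand sides depend on $(F,A)$ and $(G,B)$ only through $e$ or $q$.

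For parts $(1)$ and $(3)$ I would invoke Lemma \ref{lm-myDistances}$(1)$, namely $e\!\left((F,A),(F,A)^{c}\right)=2\left\Vert A\right\Vert$, and substitute it into the two defining formulas that involve $e$, obtaining $S_{K}^{e}\!\left((F,A),(F,A)^{c}\right)=\frac{1}{1+2\left\Vert A\right\Vert}$ and $S_{W}^{e}\!\left((F,A),(F,A)^{c}\right)=e^{-2\left\Vert A\right\Vert\alpha}$. For parts $(2)$ and $(4)$ I would likewise invoke Lemma \ref{lm-myDistances}$(2)$, namely $q\!\left((F,A),(F,A)^{c}\right)=\sqrt{2\left\Vert A\right\Vert}$, and substitute it into the two defining formulas that involve $q$, obtaining $S_{K}^{q}\!\left((F,A),(F,A)^{c}\right)=\frac{1}{1+\sqrt{2\left\Vert A\right\Vert}}$ and $S_{W}^{q}\!\left((F,A),(F,A)^{c}\right)=e^{-\sqrt{2\left\Vert A\right\Vert}\,\alpha}$. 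These are precisely the four claimed identities.

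There is no substantive obstacle: the entire content sits inside Lemma \ref{lm-myDistances}, whose parts $(1)$ and $(2)$ themselves rest on the two elementary set facts $A\Delta A=\phi$ (so the attribute‑mismatch term in $e$ and $q$ vanishes) and $F(\varepsilon)\Delta F^{c}(\varepsilon)=X$ for every $\varepsilon\in A$ (a subset of $X$ and its complement are disjoint with union $X$). The only mild point to flag is the standing hypothesis in Definition \ref{my-distances} that $A$ be non‑void, which is automatically inherited here since the second argument $(F,A)^{c}$ has attribute set $A$; when $\left\Vert A\right\Vert=0$ the formulas degenerate to the value $1$ consistently with $(s2)$. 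Neither the positivity of $\alpha$ nor the axioms $(s1)$–$(s4)$ are needed for the computation.
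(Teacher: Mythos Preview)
Your proposal is correct and matches the paper's own approach: the paper states that the proofs are straightforward in view of Lemma~\ref{lm-myDistances}, and your argument is precisely the intended substitution of $e\!\left((F,A),(F,A)^{c}\right)=2\left\Vert A\right\Vert$ and $q\!\left((F,A),(F,A)^{c}\right)=\sqrt{2\left\Vert A\right\Vert}$ into the defining formulas for $S_{K}^{e},S_{K}^{q},S_{W}^{e},S_{W}^{q}$.
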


\begin{proposition}
For the soft sets $\left( F_{\phi },E\right) $, $\left( F_{X},E\right) $ in
a soft space $\left( X,E\right) $, we have:

$\left( 1\right) $ $S_{K}^{e}\left( \left( \left( F_{\phi },E\right) ,\left(
F_{X},E\right) \right) \right) =\frac{1}{1+\sqrt{\left\Vert E\right\Vert
\left\Vert X\right\Vert }},$

$\left( 2\right) $ $S_{K}^{q}\left( \left( \left( F_{\phi },E\right) ,\left(
F_{X},E\right) \right) \right) =\frac{1}{1+\sqrt{\left\Vert E\right\Vert }},$

$\left( 3\right) $ $S_{W}^{e}\left( \left( \left( F_{\phi },E\right) ,\left(
F_{X},E\right) \right) \right) =e^{-\sqrt{\left\Vert E\right\Vert \left\Vert
X\right\Vert }\alpha },$

$\left( 4\right) $ $S_{W}^{q}\left( \left( \left( F_{\phi },E\right) ,\left(
F_{X},E\right) \right) \right) =e^{-\sqrt{\left\Vert E\right\Vert }\alpha
}.\bigskip $
\end{proposition}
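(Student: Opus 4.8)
The plan is to obtain all four identities by direct substitution of the two distance values computed in Lemma~\ref{lm-myDistances} into the closed forms of the similarity measures $S_{K}^{e}$, $S_{K}^{q}$, $S_{W}^{e}$, $S_{W}^{q}$ introduced just above. So this is not really a new argument at all; it is a bookkeeping step riding entirely on the preceding lemma.

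First I would recall that parts $(3)$ and $(4)$ of Lemma~\ref{lm-myDistances} give $e\left(\left(F_{\phi},E\right),\left(F_{X},E\right)\right)=\sqrt{\left\Vert E\right\Vert\left\Vert X\right\Vert}$ and $q\left(\left(F_{\phi},E\right),\left(F_{X},E\right)\right)=\sqrt{\left\Vert E\right\Vert}$. These are the only inputs needed; all the genuine work (observing $A\Delta B=\phi$ because both soft sets have the common attribute set $E$, and evaluating $F_{\phi}\left(\varepsilon\right)\Delta F_{X}\left(\varepsilon\right)=X$ for every $\varepsilon\in E$, so that each summand contributes $\left\Vert X\right\Vert$) has already been discharged in that lemma, and in particular the $\chi$ convention for empty unions is never triggered here since $F_{\phi}\left(\varepsilon\right)\cup F_{X}\left(\varepsilon\right)=X\neq\phi$.

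Then, for $(1)$, plugging the value of $e$ into $S_{K}^{e}\left(\left(F,A\right),\left(G,B\right)\right)=\frac{1}{1+e\left(\left(F,A\right),\left(G,B\right)\right)}$ yields $\frac{1}{1+\sqrt{\left\Vert E\right\Vert\left\Vert X\right\Vert}}$; for $(2)$, plugging the value of $q$ into $S_{K}^{q}$ gives $\frac{1}{1+\sqrt{\left\Vert E\right\Vert}}$; and for $(3)$ and $(4)$, substituting the same two distances into $S_{W}^{e}\left(\left(F,A\right),\left(G,B\right)\right)=e^{-\alpha\cdot e\left(\left(F,A\right),\left(G,B\right)\right)}$ and $S_{W}^{q}\left(\left(F,A\right),\left(G,B\right)\right)=e^{-\alpha\cdot q\left(\left(F,A\right),\left(G,B\right)\right)}$ gives $e^{-\sqrt{\left\Vert E\right\Vert\left\Vert X\right\Vert}\,\alpha}$ and $e^{-\sqrt{\left\Vert E\right\Vert}\,\alpha}$ respectively.

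There is no real obstacle: once Lemma~\ref{lm-myDistances} is in hand, each item is a one-line substitution, exactly as the paper indicates when it says the proofs are straightforward in view of that lemma. The only point worth stating explicitly, for cleanliness, is that $\left(F_{\phi},E\right)$ and $\left(F_{X},E\right)$ share the attribute set $E$, so the contribution $\left\Vert A\Delta B\right\Vert$ to $e$ and $q$ vanishes and the whole distance reduces to the value-set term, which is what makes the radicals collapse to the stated forms.
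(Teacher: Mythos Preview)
Your proposal is correct and follows exactly the route the paper indicates: the paper states the proofs are straightforward in view of Lemma~\ref{lm-myDistances}, and you carry out precisely that substitution of the distance values $e=\sqrt{\left\Vert E\right\Vert\left\Vert X\right\Vert}$ and $q=\sqrt{\left\Vert E\right\Vert}$ into the Koczy and Williams--Steele similarity formulas. There is nothing to add.
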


\section{\textbf{An Application of Similarity Measures in Financial Diagnosis%
\protect\bigskip }}

We now present a financial diagnosis problem where similarity measures can
be applied.

The notion of similarity measure of two soft sets can be applied to detect
whether a firm is suffering from a certain economic syndrome or not. In the
following example, we estimate if two firms with observed profiles of
financial indicators are suffering from serious liquidity problem. Suppose
the firm profiles are given as:

\begin{description}
\item[Profile 1] The firm ABC maintains a beerish future outlook as well as
same behaviour in trading of its share prices. During last fiscal year the
profit-earning ratio continued to rise. Inflation is increasing
continuously. ABC has a low amount of paid-up capital and a similar
situation is seen in foreign direct investment flowing into ABC.

\item[Profile 2] The firm XYZ showed a fluctuating share price and hence a
varying future outlook. Like ABC profit-earning ratio remained beerish. As
both firms are in the same economy, inflation is also rising for XYZ and may
be considered even high in view of XYZ. Competition in the business area of
XYZ is increasing. Debit level went high but the paid-up capital lowered.
\end{description}

For this, we first construct a model soft set for liquidity-problem and the
soft sets for the firm profiles. Next we find the similarity measure of
these soft sets. If they are significantly similar, then we conclude that
the firm is possibly suffering from liquidity problem.

Let $X=\{$inflation, profit-earning ratio, share price, paid-up capital,
competitiveness, business diversification, future outlook, debt level,
foreign dirct investment, fixed income$\}$ be the collection of financial
indicators which are given in both profiles. Further let $E=\left\{ \text{%
fluctuating, medium, rising, high, beerish}\right\} $ be the universe of
parameters, which are basically linguistic labels commonly used to describe
the state of financial indicators.

The profile of a firm by observing its financial indicators may easily be
coded into a soft set using appropriate linguistic labels. Let $\left(
F,A\right) $ and $\left( G,B\right) $ be soft sets coding profiles of firms
ABC and XYZ, respectively, and are given as:%
\begin{eqnarray*}
\left( F,A\right) &=&\left\{ 
\begin{array}{c}
\text{bearish}=\left\{ \text{future outlook, share price}\right\} ,~\text{%
rising}=\left\{ \text{profit earning ratio, inflation}\right\} ,\text{ } \\ 
\text{low}=\left\{ \text{paid-up capital, foreign direct investment}\right\}%
\end{array}%
\right\} , \\
\left( G,B\right) &=&\left\{ 
\begin{array}{c}
\text{fluctuating=}\left\{ \text{share price, future outlook}\right\} ,\text{
beerish=}\left\{ \text{profit earning ratio}\right\} ,\text{ } \\ 
\text{rising=}\left\{ \text{inflation, compitition}\right\} ,\text{high=}%
\left\{ \text{inflation, debit level}\right\} ,\text{ low=}\left\{ \text{%
paid-up capital}\right\} \text{ }%
\end{array}%
\right\} .
\end{eqnarray*}%
The model soft set for a firm suffering from liquidity problem can easily be
prepared in a similar manner by help of a financial expert. In our case we
take it to be as follows:%
\begin{equation*}
\left( H,C\right) =\left\{ 
\begin{array}{c}
\text{fluctuating=}\left\{ \text{share price, future outlook}\right\} ,\text{
low=}\left\{ \text{fixed income, paid-up capital}\right\} \\ 
\text{beerish=}\left\{ \text{profit earning ratio, foreign direct investment}%
\right\} ,\text{ high=}\left\{ \text{inflation},\text{ debt level}\right\}%
\end{array}%
\right\} .
\end{equation*}

For the sake of ease in mathematical manipulation we denote the indicators
and labels by symbols as follows:%
\begin{equation*}
\begin{tabular}{lll}
$i$ & $=$ & inflation \\ 
$p$ & $=$ & profit-earning ratio \\ 
$s$ & $=$ & share price \\ 
$c$ & $=$ & paid-up capital \\ 
$m$ & $=$ & competition \\ 
$d$ & $=$ & business diversification \\ 
$o$ & $=$ & future outlook \\ 
$l$ & $=$ & debt level \\ 
$f$ & $=$ & foreign dirct investment \\ 
$x$ & $=$ & fixed income%
\end{tabular}%
\text{ and }%
\begin{tabular}{lll}
$e_{1}$ & $=$ & fluctuating \\ 
$e_{2}$ & $=$ & low \\ 
$e_{3}$ & $=$ & rising \\ 
$e_{4}$ & $=$ & high \\ 
$e_{5}$ & $=$ & beerish%
\end{tabular}%
\end{equation*}

Thus we have $X=\left\{ i,p,s,c,m,d,o,l,f,x\right\} $, $E=\left\{
e_{1},e_{2},e_{3},e_{4},e_{5}\right\} $ and the soft sets of firm profiles
become:%
\begin{eqnarray*}
\left( F,A\right) &=&\left\{ e_{5}=\left\{ o,s\right\} ,e_{3}=\left\{
p,i\right\} ,e_{2}=\left\{ s,f\right\} \right\} , \\
\left( G,B\right) &=&\left\{ e_{1}=\left\{ s,o\right\} ,e_{2}=\left\{
c\right\} ,e_{3}=\left\{ i,m\right\} ,e_{4}=\left\{ i,l\right\}
,e_{5}=\left\{ p,f\right\} \right\} , \\
\left( H,C\right) &=&\left\{ e_{1}=\left\{ o,s\right\} ,e_{2}=\left\{
c\right\} ,e_{4}=\left\{ i,l\right\} ,e_{5}=\left\{ p,f\right\} \right\} .
\end{eqnarray*}%
As the calculations give:%
\begin{eqnarray*}
S_{K}^{e}\left( \left( F,A\right) ,\left( H,C\right) \right) &=&\frac{1}{4+%
\sqrt{7}}=0.15, \\
S_{K}^{e}\left( \left( G,B\right) ,\left( H,C\right) \right) &=&\frac{1}{2}%
=0.5.
\end{eqnarray*}%
Hence we conclude that the firm with profile $\left( G,B\right) $ i.e. XYZ
is suffering from a liquidity problem as its soft set profile is
significantly similar to the standard liquidity problem profile. Whereas the
firm ABC is very less likely to be suffering from the same problem.

\begin{conclusion}
Majumdar and Samanta \cite{sfst08maj} use matrix representation based
distances of soft sets to introduce matching function and distance based
similarity measures. We first give counterexamples to show that Majumdar and
Samanta's Definition 2.7 and Lemma 3.5(3) contain errors, then prove some
properties of the distances introduced by them, thus making their Lemma 4.4,
a corllary of our result.

The tacit assumption of \cite{sfst08maj} that matrix representation is a
suitable representation for mathematical manipulation of soft sets, has been
shown to be flawed, in Section 4. This raises a natural question as to what
approach be considered suitable for similarity measures of soft sets? In one
possible reply to this we introduce set operations based measures. Our
Example \ref{ex_showing_superiority} presents a case where Majumdar-Samanta
similarity measure produces an erroneous result but the measure proposed
herein decides correctly. The new similarity measures have been applied to
the problem of financial diagnosis of firms. A technique of using linguistic
labels as parameters for soft sets has been used to model natural-language
descriptions in terms of soft sets. This exhibits the rich prospects held by
Soft Set Theory as a tool for problems in social, biological and economic
systems.
\end{conclusion}

\begin{acknowledgement}
The author wishes to express his sincere gratitude to anonymous referee(s)
for valuable comments which have improved the presentation. The author is
also thankful to the area editor Prof. John Mordeson, of this journal, for
his kindness and prompt response.
\end{acknowledgement}


\begin{thebibliography}{99}
\bibitem{sfst08ali} M. I. Ali, F. Feng, X. Liu, W. K. Min, M. Shabir, 
\textit{On some new operations in soft set theory, }Computers and
Mathematics with Applications (2008), doi:10.1016/j.camwa.2008.11.009.

\bibitem{submission-OnSfSts} Athar Kharal, B. Ahmad, \textit{Mappings on
soft classes}, (submitted to Indian Jr. of Pure and Applied Math on 12 May
2009 MS\# 18953JM).

\bibitem{fst93hyu} L. K. Hyung, Y. S. Song, K. M. Lee, \textit{Similarity
measure between fuzzy sets and between elements}, Fuzzy Sets and Systems 62
(1994) 291-293.

\bibitem{xx00koc} L. T. Koczy, T. Domonkos, \textit{Fuzzy rendszerek},
Typotex (2000).

\bibitem{sfst07kov} D. V. Kovkov, V. M. Kolbanov, D. A. Molodtsov, \textit{%
Soft sets theory-based optimization,} Jr. Computer and Systems Sciences
Int'l. 46(6)(2007) 872-880.

\bibitem{sfst02maj} P. K. Maji, A. R. Roy, \textit{An application of soft
sets in a decision making problem}, Computers and Mathematics with
Applications 44 (2002) 1077-1083.

\bibitem{sfst08maj} P. Majumdar, S. K. Samanta, \textit{Similarity measure
of soft sets}, New Mathematics and Natural Computation (NMNC) 4(1)(2008)
1-12.

\bibitem{sfst03maj} P. K. Maji, R. Biswas, A. R. Roy, \textit{Soft set
theory,} Computers and Mathematics with Applications 45 (2003) 555-562.

\bibitem{sfst99mol} D. Molodtsov, \textit{Soft set theory-First results},
Computers and Mathematics with Applications 37(4/5)(1999) 19-31.

\bibitem{sfst06mus} M. M. Mushrif, S. Sengupta, A. K. Ray, \textit{Texture
classification using a novel soft set theory based classification algorithm}%
, LNCS 3851(2006) 246-254.

\bibitem{sfst05pei} D. Pei, D. Miao, \textit{From soft sets to information
systems}, in Granular Computing, 2005 IEEE International Conference on, vol.
2, 617-621, doi 10.1109/GRC.2005.1547365.

\bibitem{xx00wil} J. Williams, N. Steele, \textit{Difference, distance and
similarity as a basis for fuzzy decision support based on prototypical
decision classes}, Fuzzy Sets and Systems 131(2002) 35-46.

\bibitem{sfst05xia} Z. Xiao, L. Chen, B. Zhong, S. Ye, \textit{Recognition
for soft information based on the theory of soft sets,} Int'l. Conf.
Services Systems and Services Management 2005, Proc of vol2 Issue 13-15,
(June 2005)1104-1106.

\bibitem{sfst08zou} Y. Zou, Z. Xiao, \textit{Data analysis approaches of
soft sets under incomplete information, }Knowledge-Based Systems (2008) doi.
10.1016/j.knosys.2008.04.004.
\end{thebibliography}
\end{document}